\newtheorem{theorem}{Theorem}[section]
\newtheorem{proposition}{Proposition}[section]
\newtheorem{definition}{Definition}[section]
\newtheorem{corollary}{Corollary}[section]
\newtheorem{lemma}{Lemma}[section]
\newcommand{\blE}{\mathbb{E}}
\newcommand{\Sy}{\mathbb{S}}
\renewcommand{\Re}{\mathbb{R}}
\newcommand{\Vr}{\mathbb{V}}
\newcommand{\cN}{\mathcal{N}}
\newcommand{\cR}{\mathcal{R}}
\def\A{\mathbf{A}}
\def\B{\mathbf{B}}
\def\D{\mathbf{D}}
\def\I{\mathbf{I}}
\def\K{\mathbf{K}}
\def\Q{\mathbf{Q}}
\def\W{\mathbf{W}}
\def\b{\boldsymbol{b}}
\def\c{\boldsymbol{c}}
\def\e{\boldsymbol{e}}
\def\v{\boldsymbol{v}}
\def\w{\boldsymbol{w}}
\def\x{\boldsymbol{x}}
\def\y{\boldsymbol{y}}
\def\z{\boldsymbol{z}}
\def\ze{\mathbf{0}}
\newcommand{\prox}{\mathrm{prox}}
\newcommand{\fix}{\mathrm{fix}\,}
\DeclareMathOperator*{\diag}{diag}
\begin{document}

\markboth{Submitted in IEEE Signal Processing Letters}{}

\title{On the Strong Convexity of \\ PnP Regularization using Linear Denoisers}

\date{}

\author{Arghya~Sinha  and  Kunal~N.~Chaudhury
\thanks{A.~Sinha was supported by a PMRF Fellowship TF/PMRF-22-5534 and K.~N.~Chaudhury was supported by grants CRG/2020/000527 and STR/2021/000011 from the Government of India.}
}

\maketitle

\begin{abstract}
In the Plug-and-Play (PnP) method, a denoiser is used as a regularizer within classical proximal algorithms for image reconstruction. It is known that a broad class of linear denoisers can be expressed as the proximal operator of a convex regularizer. Consequently, the associated PnP algorithm can be linked to a convex optimization problem $\mathcal{P}$. {For such a linear denoiser}, we prove that $\mathcal{P}$ exhibits strong convexity for linear inverse problems. Specifically, we show that the strong convexity of $\mathcal{P}$ can be used to certify objective and iterative convergence of \textit{any} PnP algorithm derived from classical proximal methods. 
\end{abstract}
 

\section{Introduction}

{There has been a growing interest in applying denoisers as regularizers for image reconstruction \cite{sreehari2016plug,hurault2022gradient,cohen2021regularization,romano2017little,reehorst2018regularization,zhu2023denoising,terris2024equivariant,pesquet_learning_2021,hertrich_convolutional_2021,goujon_learning_2024}}. The archetype framework in this regard is the Plug-and-Play (PnP) method \cite{sreehari2016plug}, wherein regularization is performed by plugging a denoiser into classical proximal algorithms. {The success of PnP has spurred interest in its stability aspects~\cite{sreehari2016plug,CWE2017,Ryu2019_PnP_trained_conv,Teodoro2019PnPfusion,reehorst2018regularization,pesquet_learning_2021,terris2024equivariant,hurault2022gradient,hertrich_convolutional_2021,proximal-hurault22a}. Indeed, while trained denoisers perform exceedingly well in practice, they can at times cause the iterations to diverge~\cite{reehorst2018regularization,nair2024averaged,terris2024equivariant}. Thus, having a convergence guarantee can act as a safeguard~\cite{hurault2022gradient,hertrich_convolutional_2021,pesquet_learning_2021}.}

In this note, we are concerned with linear denoisers \cite{milanfar2013tour,milanfar2013symmetrizing} that perform well in different applications \cite{nair2019hyperspectral,nair2022plug} and yet are analytically tractable \cite{sreehari2016plug,Teodoro2019PnPfusion,gavaskar2021plug,nair2021fixed}. Specifically, it was shown in \cite{sreehari2016plug} that certain linear denoisers can be conceived as the proximal operator of a convex regularizer $\varphi$. Consequently, we can view PnP as a classical proximal algorithm
applied to a convex optimization problem of the form
\begin{equation}
\label{eq:recon}
\min_{\x \in \Re^n} \ \ell(\x) + \lambda\, \varphi(\x) 
\end{equation}
for some $\lambda >0$, where $\ell$ is the model-based loss function. This forms the basis of the convergence analysis in prior works~\cite{sreehari2016plug,Teodoro2019PnPfusion,gavaskar2021plug}. 

\textbf{Contributions}. In this note, we prove that the objective function in \eqref{eq:recon} is strongly convex for linear inverse problems. This is a nontrivial observation since the components $\ell$ and $\varphi$ are not strongly convex. Additionally, we show that the strong convexity result can be extended to nonsymmetric denoisers, provided we work with the so-called scaled PnP algorithms in \cite{gavaskar2021plug}. Strong convexity guarantees that \eqref{eq:recon} has exactly one minimizer \cite{beck2017book}. Thus, strong convexity does away with the need to \textit{assume} that \eqref{eq:recon} has minimizers, reinforcing existing convergence results. Notably, we are able to certify objective and iterate convergence of \textit{any} PnP algorithm derived from a classical proximal method (Thm.~\ref{thm:convg}).

\textbf{Organization}. We provide some necessary background on PnP in Sec.~\ref{backgnd} before
establishing the strong convexity of \eqref{eq:recon} for both symmetric and nonsymmetric denoisers in Sec.~\ref{results}. We show how this can be used to guarantee objective and iterate convergence of PnP algorithms in Sec.~\ref{convg}. Numerical results on strong convexity are presented in Sec.~\ref{exp}.

\textbf{Notations}. We mostly follow the notations in \cite{beck2017book}. We will view $\Re^n$ (the space of grayscale images) as an Euclidean space $\blE$, with an appropriate inner product $\langle \cdot, \cdot \rangle$ and norm $\| \cdot \|$ induced by $\langle \cdot, \cdot \rangle$. We denote this as a tuple $(\blE, \langle \cdot, \cdot \rangle)$ since we can have two different inner products for the same $\blE$. The set of $(-\infty,\infty]$-valued functions on $\blE$ that are proper, closed, and convex is denoted by $\Gamma_0(\blE)$. The indicator function of a nonempty convex set  $\Omega \subset \blE$ is the function $\iota_{\Omega} \in \Gamma_0(\blE)$ given by $\iota_{\Omega}(\x) = 0$ for $\x \in \Omega$, and $\iota_{\Omega}(\x) =+\infty$ for $\x \notin \Omega$. We use $\fix (\W)$ to denote the fixed points of $\W$, $\cN(\A)$ and $\cR(\A)$ for the null space and the range space of $\A$, and $\sigma(\B)$ for the spectrum of $\B$. We use $\Sy^n$ (resp.~$\Sy^n_+$) for the set of symmetric (resp.~positive semidefinite) matrices in $\Re^{n \times n}$ and $\mathcal{S}_n$ for the unit sphere in $\Re^n$. {We use $\W^{\dagger}$ for the pseudoinverse of $\W$.}

\section{Background}
\label{backgnd}

Any PnP algorithm has two basic components: the loss function and the denoiser. We will work with linear inverse problems, where the task is to recover an unknown image $\boldsymbol{\xi} \in \Re^n$ from linear measurements $\b = \A \boldsymbol{\xi}  + \w$,
where $\A \in \Re^{m \times n}$ is the forward operator, $\b \in \Re^m$ is the observed image, and $\w \in \Re^m$ is white {Gaussian} noise. The standard loss function in this case is
\begin{equation}
\label{loss}
\ell_{\A}(\x) =  \frac{1}{2}\| \A \x - \b \|_2^2.
\end{equation}
This model includes inpainting, deblurring, and superresolution. The forward operator $\A$ is a sampling operator in inpainting, a blur (convolution) in deblurring, and composition of blur and subsampling in superresolution \cite{bouman2022foundations}. On the other hand, we will work with linear denoisers such as NLM \cite{buades2005non}, symmetric NLM \cite{sreehari2016plug}, LARK \cite{takeda2007kernel}, GLIDE \cite{talebi2013global}, and GMM \cite{Teodoro2019PnPfusion}. We refer the reader to \cite{milanfar2013tour,milanfar2013symmetrizing} and \cite[Sec.~IV]{sreehari2016plug} for more information on symmetric and kernel denoisers. 

In PnP algorithms, regularization is performed by plugging a denoiser into a proximal algorithm such as ISTA, FISTA, HQS, or ADMM \cite{sreehari2016plug,Ryu2019_PnP_trained_conv,gavaskar2021plug,zhu2023denoising}. For example, starting with an initial guess $\x_0 \in \Re^n$, the update in PnP-ISTA involves a gradient step on the loss followed by a denoising (regularization) step:
\begin{equation}
\label{eq:pnp-ista}
\x_{k+1} = \W\big(\x_k- \gamma \nabla \ell_{\A}(\x_k)\big),
\end{equation}
where  $\gamma$ is the step size and $\W$ is the denoiser.

It was observed in \cite{sreehari2016plug} that for symmetric denoisers such as  DSG-NLM \cite{sreehari2016plug} and GMM \cite{Teodoro2019PnPfusion}, $\W$ can be expressed as the proximal operator of a convex function~\cite{beck2017book}. 
\begin{definition}
\label{defProx}
Consider an Euclidean space $(\blE, \langle \cdot, \cdot \rangle)$ and let $\varphi \in \Gamma_0(\blE)$. The proximal operator of $\varphi$ is defined as
\begin{equation}
(\forall \, \x \in \blE) \quad \prox_{\varphi, \| \cdot \|}(\x) = \underset{\z \in \blE}{\arg \min} \ \frac{1}{2} \| \z - \x \|^2 + \varphi(\z).
\end{equation}
\end{definition}

It was shown in \cite{sreehari2016plug} that there exists {$\varphi_\W \in \Gamma_0(\Re^n)$ such that $\W = \prox_{\varphi_{\W},\| \cdot \|_2}$.} An exact formula for $\varphi_\W$ was later reported in \cite{Teodoro2019PnPfusion}. However, it will be convenient for us to work with the following reformulation.
\begin{proposition}
\label{prop:formula_prox}
Let $\W \in \Sy^n$ and $\sigma(\W) \subset [0,1]$. Define $\varphi_{\W} \in \Gamma_0(\Re^n)$ to be 
\begin{equation}
\label{formula_prox}
\varphi_{\W}(\x) = \frac{1}{2} \x^\top  \W^{\dagger} (\I-\W) \x + \iota_{\cR(\W)} (\x).
\end{equation}
Then $\W = \prox_{\varphi_{\W},\| \cdot \|_2}$.
\end{proposition}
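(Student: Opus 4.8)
The plan is to compute $\prox_{\varphi_\W,\|\cdot\|_2}(\x)$ directly and show it equals $\W\x$ for every $\x\in\Re^n$, by diagonalizing $\W$ and reducing the proximal minimization to a separable problem in the eigencoordinates.

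\textbf{Step 1 (diagonalize and check $\varphi_\W\in\Gamma_0$).} Since $\W\in\Sy^n$, write $\W=\sum_{i=1}^n\lambda_i\u_i\u_i^\top$ with $\{\u_i\}$ an orthonormal eigenbasis and $\lambda_i\in[0,1]$ by hypothesis. Then $\cR(\W)=\Span\{\u_i:\lambda_i>0\}$, $\W^\dagger=\sum_{\lambda_i>0}\lambda_i^{-1}\u_i\u_i^\top$, and hence
\[
\W^\dagger(\I-\W)=\sum_{\lambda_i>0}\frac{1-\lambda_i}{\lambda_i}\,\u_i\u_i^\top\in\Sy^n_+.
\]
In particular $\varphi_\W$ has effective domain the (closed) subspace $\cR(\W)$, on which it agrees with a finite nonnegative quadratic form, so it is proper, closed, and convex, i.e.\ $\varphi_\W\in\Gamma_0(\Re^n)$, and the expression in \eqref{formula_prox} never involves division by zero.

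\textbf{Step 2 (reduce to a separable problem).} Fix $\x$ and expand $\x=\sum_i x_i\u_i$ with $x_i=\langle\x,\u_i\rangle$. Because $\varphi_\W(\z)=+\infty$ off $\cR(\W)$, the minimizer of $\tfrac12\|\z-\x\|_2^2+\varphi_\W(\z)$ lies in $\cR(\W)$; writing $\z=\sum_{\lambda_i>0}z_i\u_i$, the objective equals, up to the additive constant $\tfrac12\sum_{\lambda_i=0}x_i^2$,
\[
\frac12\sum_{\lambda_i>0}\Big[(z_i-x_i)^2+\frac{1-\lambda_i}{\lambda_i}z_i^2\Big],
\]
which decouples across $i$.

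\textbf{Step 3 (solve the scalar problems).} Setting the derivative in $z_i$ to zero gives $(z_i-x_i)+\tfrac{1-\lambda_i}{\lambda_i}z_i=0$, i.e.\ $z_i/\lambda_i=x_i$, so $z_i=\lambda_i x_i$ (the case $\lambda_i=1$ being included). Reassembling, the unique minimizer is $\sum_{\lambda_i>0}\lambda_i x_i\u_i=\sum_i\lambda_i x_i\u_i=\W\x$, so $\prox_{\varphi_\W,\|\cdot\|_2}(\x)=\W\x$ for all $\x$, which is the claim.

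I do not anticipate a genuine obstacle here; the only points needing care are that the $\lambda_i=0$ directions are correctly suppressed by the indicator $\iota_{\cR(\W)}$ (so the constant term in Step 2 is harmless) and that the pseudoinverse is handled by restricting to $\cR(\W)$ from the outset. For readers who prefer a coordinate-free argument, the same conclusion follows from the optimality condition $\x-\z\in\partial\varphi_\W(\z)$ together with $\partial\varphi_\W(\z)=\W^\dagger(\I-\W)\z+\cN(\W)$ for $\z\in\cR(\W)$, using $\W^\dagger(\I-\W)\W=(\I-\W)\W^\dagger\W$ and that $\W^\dagger\W$ is the orthogonal projection onto $\cR(\W)$; I would include this as a brief remark but carry out the eigenvalue computation as the main proof.
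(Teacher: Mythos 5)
Your proof is correct, and it takes a genuinely different route from the paper's. The paper argues via convex-analytic optimality: it verifies $\ze \in \partial h(\W\x)$ for $h = q + \iota_{\cR(\W)}$, which requires invoking the subdifferential sum rule (with the relative-interior qualification condition), identifying $\partial\iota_{\cR(\W)}$ with the normal cone $\cN(\W)$, and using the pseudoinverse identity $\W^\dagger\W^2 = \W$. You instead diagonalize $\W$ and reduce the proximal minimization to decoupled scalar quadratics in the eigencoordinates, which you solve explicitly. Your approach is more elementary and self-contained (only the spectral theorem and one-variable calculus), it sidesteps the constraint-qualification bookkeeping entirely, and it makes transparent exactly where the hypothesis $\sigma(\W)\subset[0,1]$ enters: $\lambda_i > 0$ keeps the coefficient $(1-\lambda_i)/\lambda_i$ finite and $\lambda_i \leqslant 1$ keeps it nonnegative, so each scalar problem has coefficient $1/(2\lambda_i) > 0$ on $z_i^2$ and hence a unique minimizer $z_i = \lambda_i x_i$. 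What the eigenvalue computation gives up is coordinate-independence: the paper's subdifferential argument is the template that generalizes to the nonsymmetric kernel case (Proposition~\ref{prop:proxkernelW}) and to non-quadratic regularizers, whereas your argument is tied to the simultaneous diagonalizability of $\W$, $\W^\dagger$, and $\I-\W$. Your closing remark correctly sketches the coordinate-free variant; either version would serve as a complete proof.
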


A proof of Proposition~\ref{prop:formula_prox} is provided in the Appendix for completeness. In other words, if $\W \in \Sy^n$ and $\sigma(\W) \subset [0,1]$, we can view PnP-ISTA as the standard ISTA algorithm applied to problem \eqref{eq:recon}, where the regularizer $\varphi=\varphi_{\W}$ is given by \eqref{formula_prox}. This applies not only to ISTA but to any PnP algorithm based on proximal methods, such as FISTA, ADMM, and HQS.

On the other hand, kernel denoisers such as NLM are not symmetric and hence cannot be expressed as a proximal operator. This comes as a consequence of a classical result due to Moreau \cite{Moreau1965}. Customized to linear operators, the result is as follows.

\begin{theorem}
\label{Moreau}
Consider the Euclidean space $(\Re^n,\langle \cdot, \cdot \rangle_2)$ with inner product $\langle \x, \y \rangle_2 = \x^\top \! \y$. A linear operator $\W$ on $(\Re^n,\langle \cdot, \cdot \rangle_2)$ is a proximal operator if and only if $\W \in \Sy^n$ and $\sigma(\W) \subset [0,1]$.
\end{theorem}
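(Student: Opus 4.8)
The plan is to prove the two implications separately. The ``if'' direction is immediate: if $\W \in \Sy^n$ and $\sigma(\W) \subset [0,1]$, then Proposition~\ref{prop:formula_prox} supplies a regularizer $\varphi_\W \in \Gamma_0(\Re^n)$ with $\W = \prox_{\varphi_\W, \| \cdot \|_2}$, so $\W$ is a proximal operator by Definition~\ref{defProx}.

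For the ``only if'' direction, suppose $\W = \prox_{\varphi, \| \cdot \|_2}$ for some $\varphi \in \Gamma_0(\Re^n)$. I would start from the classical identity (due to Moreau) that expresses a proximal operator as the gradient of a convex function. Put $g = \tfrac{1}{2}\| \cdot \|_2^2 + \varphi$; since $g$ is $1$-strongly convex, its Fenchel conjugate $g^*$ belongs to $\Gamma_0(\Re^n)$, has full domain, and is differentiable everywhere with a $1$-Lipschitz gradient. A short computation of $g^*$ (equivalently, differentiating the Moreau envelope of $\varphi$) gives $\nabla g^* = \prox_{\varphi, \| \cdot \|_2} = \W$. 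Hence $\W$ is the gradient of the convex, everywhere-differentiable function $g^*$.

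Now I would exploit that $\W$ is linear. Since $\nabla g^* = \W$ is linear, $g^*$ is a quadratic function, and because it is a gradient field its Jacobian $\W$ must be symmetric (equivalently, the Hessian of $g^*$ is symmetric); thus $\W \in \Sy^n$. Convexity of $g^*$ then forces its Hessian $\W$ to be positive semidefinite, so $\sigma(\W) \subset [0,\infty)$. Finally, every proximal operator is nonexpansive, so $\| \W \x \|_2 \le \| \x \|_2$ for all $\x \in \Re^n$, i.e.\ the operator norm of $\W$ is at most $1$; together with $\W \in \Sy^n_+$ this yields $\sigma(\W) \subset [0,1]$, which completes the proof.

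The step I expect to be the main obstacle is establishing symmetry of $\W$. Nonexpansiveness---even firm nonexpansiveness---of $\prox_\varphi$ does not by itself force symmetry (a sufficiently contracted rotation is firmly nonexpansive but not symmetric), so one genuinely needs the extra structure that a proximal operator is the gradient of a convex function. Once that is in hand, positive semidefiniteness and the spectral bound follow routinely from convexity and nonexpansiveness, respectively.
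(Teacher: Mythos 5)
Your proof is correct. Note that the paper itself does not prove this theorem: it is stated as a specialization of a classical result of Moreau and supported only by a citation, so there is no in-paper argument to compare against. Your proposal therefore supplies the standard proof that the reference contains. The ``if'' direction via Proposition~\ref{prop:formula_prox} is legitimate and non-circular, since the paper's proof of that proposition is independent of this theorem. For the ``only if'' direction, your route --- writing $\prox_{\varphi,\|\cdot\|_2} = \nabla g^*$ with $g = \tfrac{1}{2}\|\cdot\|_2^2 + \varphi$, noting that $g^*$ is everywhere differentiable with $1$-Lipschitz gradient because $g$ is $1$-strongly convex, and then reading off symmetry of $\W$ from the symmetry of the Hessian of $g^*$, positive semidefiniteness from convexity of $g^*$, and $\|\W\|_2 \leqslant 1$ from nonexpansiveness --- is exactly the classical argument. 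Your closing remark is also well taken: nonexpansiveness alone gives only $\sigma(\W)$ contained in the closed unit disc and cannot yield symmetry (a contracted rotation is firmly nonexpansive), so the gradient-of-a-convex-function structure is genuinely needed, and you have invoked it correctly.
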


Motivated by this result, it was observed in \cite{gavaskar2021plug} that if $\W$ is a nonsymmetric kernel denoiser, we must work with an inner product different from the standard inner product (dot product) on $\Re^n$. More precisely, consider a kernel denoiser of the form $\W=\D^{-1}\, \K$, where the kernel matrix $\K \in \Sy^n_+$ is nonnegative with positive diagonal elements, and $\D= \diag(\K \e)$ is the normalizing matrix \cite{milanfar2013tour}. {It was observed in \cite{gavaskar2021plug} that $\W$ is a proximal operator of some $\varphi\in\Gamma_0(\Re^n)$ (also see Proposition~\ref{prop:proxkernelW}) in $(\Re^n,\langle \cdot, \cdot \rangle_{\D})$,} where
  \begin{equation}
    \label{ip}
    \langle \x, \y \rangle_{\D} := \x^\top \D \y.
  \end{equation}
Accordingly, the gradient in \eqref{eq:pnp-ista} should be changed to 
\begin{equation}
\label{gradD}
\nabla_{\D}  \ell_{\A} (\x)= \D^{-1} \nabla  \ell_{\A}(\x).
\end{equation}

The point is that the gradient depends on the choice of the inner product, and \eqref{gradD} is the gradient with respect to the inner product \eqref{ip}. The other updates remain unchanged. Following \cite{gavaskar2021plug}, we will refer to these as scaled-PnP-ISTA. Similarly, by replacing the gradient with \eqref{gradD}, we get scaled versions of FISTA, ADMM, and HQS. In Sec.~\ref{results}, we establish the objective and iterate convergence of these algorithms, which was left as an open problem in \cite{ACK2023-contractivity}.

\section{Strong Convexity of PnP}
\label{results}

We begin by defining strong convexity in the general setting of an Euclidean space \cite{beck2017book}.

\begin{definition}
\label{def:}
A function $f \in \Gamma_0(\blE)$ is said to be strongly convex on the Euclidean space $(\blE, \langle \cdot, \cdot \rangle)$ if there exists $\mu >0$ such that, for all $\x,\y \in \blE$ and $\theta \in [0,1]$,
\begin{align}
\label{def:SC}
f( \theta \x + (1-\theta) \y) \leqslant \theta f(\x) &+ (1-\theta) f(\y) \nonumber \\
& - \frac{\mu}{2}\theta (1-\theta) \| \x-\y\|^2.
\end{align}
\end{definition}

The largest $\mu$ in \eqref{def:SC} is called the strong convexity index of $f$. It follows from \eqref{def:SC} that $f$ is $\mu$-strongly convex if and only if $\x \mapsto f(\x) - (\mu/2) \| \x \|^2 \in \Gamma_0(\blE)$. Based on this observation, we can conclude that the nonempty level sets of $f$ are closed and bounded. Consequently, we obtain the following important result (see \cite{beck2017book} for a proof).
{\begin{theorem}
\label{thm:existenceunique}
Suppose $f \in \Gamma_0(\blE)$ is strongly convex on $(\blE, \langle \cdot, \cdot \rangle)$. Then there exists an unique $\x^*\in \blE$ such that $\x^* = \arg\min_{\x \in \blE} \, f(\x)$.
\end{theorem}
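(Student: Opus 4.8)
The plan is to establish uniqueness and existence separately: uniqueness follows directly from the defining inequality \eqref{def:SC}, and existence follows from a compactness argument applied to a sublevel set of $f$.

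For uniqueness, I would suppose $\x_1$ and $\x_2$ are both minimizers, so that $f(\x_1) = f(\x_2) = f^*$ with $f^*$ the optimal value, and apply \eqref{def:SC} with $\theta = 1/2$ to obtain
\[
f\!\left(\tfrac{1}{2}\x_1 + \tfrac{1}{2}\x_2\right) \;\leqslant\; f^* - \tfrac{\mu}{8}\,\| \x_1 - \x_2 \|^2 .
\]
Since $f^*$ is the minimum value of $f$, the left-hand side is at least $f^*$, which forces $\x_1 = \x_2$. Hence a minimizer, if it exists, is unique.

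For existence, I would invoke the observation recorded just before the theorem, namely that $g := f - (\mu/2)\| \cdot \|^2 \in \Gamma_0(\blE)$. Since $f$ is proper, pick $\x_0$ with $f(\x_0) < +\infty$ and consider the sublevel set $L := \{ \x \in \blE : f(\x) \leqslant f(\x_0) \}$, which is nonempty (it contains $\x_0$) and closed, because $f \in \Gamma_0(\blE)$ is lower semicontinuous. To see that $L$ is bounded, I would use the standard fact that the proper closed convex function $g$ admits a global affine minorant: there exist $\a \in \blE$ and $\beta \in \Re$ with $g(\x) \geqslant \langle \a, \x \rangle + \beta$ for all $\x$. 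Then
\[
f(\x) \;=\; g(\x) + \tfrac{\mu}{2}\| \x \|^2 \;\geqslant\; \tfrac{\mu}{2}\| \x \|^2 + \langle \a, \x \rangle + \beta ,
\]
and the right-hand side tends to $+\infty$ as $\| \x \| \to \infty$, so $L$ is bounded. Since $\blE$ is finite-dimensional, $L$ is compact, and hence the lower semicontinuous function $f$ attains its infimum over $L$ at some $\x^* \in L$. For every $\x \notin L$ one has $f(\x) > f(\x_0) \geqslant f(\x^*)$, so $\x^*$ is a global minimizer of $f$ on $\blE$. Together with the uniqueness step, this proves the claim.

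The only step requiring any real care is the boundedness of $L$ — equivalently, the coercivity of $f$ — which rests on the existence of a global affine minorant for the proper closed convex function $g$ (equivalently, $g = g^{**}$ is a pointwise supremum of affine functions); once this is available, the quadratic term $(\mu/2)\| \cdot \|^2$ dominates and everything else is routine. As the paper notes, the statement could also simply be cited from \cite{beck2017book}; the sketch above merely keeps the note self-contained.
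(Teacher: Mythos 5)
Your proof is correct and follows exactly the route the paper indicates: the paper does not prove this theorem itself but defers to \cite{beck2017book}, noting only that strong convexity makes the nonempty level sets of $f$ closed and bounded, which is precisely the compactness argument you flesh out (with the affine-minorant step supplying coercivity) together with the standard $\theta = 1/2$ uniqueness argument.
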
}
\subsection{Symmetric denoiser}
We first establish strong convexity of \eqref{eq:recon} for $\varphi=\varphi_{\W}$, where $\W$ is a symmetric denoiser. We make a general observation in this regard.
\begin{lemma}
\label{lemma1}
Let $\Q \in \Sy^n_+$, $\c \in \Re^n, d \in \Re,$ and $\Vr$ be a subspace of $\Re^n$. Consider the function $f \in \Gamma_0(\Re^n)$ given by
\begin{equation}
\label{formula:f}
f(\x) = \frac{1}{2}\x^\top \! \Q \x + \c^\top \x +d + \iota_{\Vr}(\x).
\end{equation}
Then $f$ is strongly convex on $(\Re^n, \langle \cdot, \cdot \rangle_2)$ if and only if $\cN (\Q) \cap \Vr = \{\ze\}$. Moreover, the strong convexity index of $f$ is
\begin{equation}
\label{def:mu}
\mu = \inf \, \left\{\v^\top \Q \v: \ \v \in \Vr \cap \mathcal{S}_n \right\}.
\end{equation}
\end{lemma}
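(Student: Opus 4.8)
The plan is to rely on the characterization recalled just above the statement: a function in $\Gamma_0(\Re^n)$ is $\mu$-strongly convex on $(\Re^n,\langle\cdot,\cdot\rangle_2)$ if and only if $h_\mu(\x) := f(\x) - \tfrac{\mu}{2}\|\x\|_2^2$ lies in $\Gamma_0(\Re^n)$, and the strong convexity index is the largest such $\mu$. Here $h_\mu(\x) = \tfrac12\x^\top(\Q - \mu\I)\x + \c^\top\x + d + \iota_{\Vr}(\x)$. Properness and lower semicontinuity of $h_\mu$ are immediate because $\Vr$ is a closed subspace and the quadratic part is real-valued and continuous, so the entire question reduces to convexity of $h_\mu$.

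First I would reduce convexity of $h_\mu$ on $\Re^n$ to a condition on $\Vr$ alone. Since $h_\mu \equiv +\infty$ off $\Vr$ and any segment with endpoints in $\Vr$ stays inside $\Vr$, the function $h_\mu$ is convex if and only if its restriction to $\Vr$ --- namely the smooth quadratic $\x\mapsto \tfrac12\x^\top(\Q-\mu\I)\x + \c^\top\x + d$ --- is convex on $\Vr$. Parametrizing $\Vr$ by a basis, a smooth quadratic is convex on $\Vr$ exactly when its (symmetric) Hessian $\Q - \mu\I$ is positive semidefinite as a form on $\Vr$, i.e. $\v^\top\Q\v \geq \mu\|\v\|_2^2$ for every $\v\in\Vr$. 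Assuming $\Vr \neq \{\ze\}$ (the case $\Vr=\{\ze\}$ is trivial and does not arise in the application, where $\Vr = \cR(\W)$), normalizing $\v$ shows this is equivalent to $\mu \leq \mu^{*}$, where $\mu^{*} := \inf\{\v^\top\Q\v : \v\in\Vr\cap\mathcal{S}_n\}$; moreover this infimum is attained --- hence a minimum --- since $\Vr\cap\mathcal{S}_n$ is compact and $\v\mapsto\v^\top\Q\v$ is continuous.

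It then follows that $f$ is strongly convex if and only if there exists $\mu>0$ with $\mu\leq\mu^{*}$, that is, if and only if $\mu^{*}>0$; and in that case the largest admissible $\mu$, i.e. the strong convexity index, is precisely $\mu^{*}$, which is exactly \eqref{def:mu}. It remains to recognize when $\mu^{*}>0$. Because the minimum defining $\mu^{*}$ is attained, $\mu^{*}>0$ iff $\v^\top\Q\v>0$ for every nonzero $\v\in\Vr$. Using $\Q\in\Sy^n_+$, write $\v^\top\Q\v = \|\Q^{1/2}\v\|_2^2$; this vanishes iff $\Q^{1/2}\v = \ze$ iff $\Q\v=\ze$. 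Hence $\mu^{*}>0$ iff $\Vr$ contains no nonzero element of $\cN(\Q)$, i.e. iff $\cN(\Q)\cap\Vr=\{\ze\}$, which establishes both the equivalence and the formula for the index.

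I expect the main (mild) obstacle to be the first reduction: arguing carefully that convexity of the extended-real-valued $h_\mu$ on all of $\Re^n$ is governed only by positive semidefiniteness of $\Q-\mu\I$ restricted to $\Vr$ --- not on all of $\Re^n$ --- and correctly tracking the resulting supremum of admissible $\mu$. The remaining ingredients (compactness of $\Vr\cap\mathcal{S}_n$, attainment of a continuous minimum, and the identity $\v^\top\Q\v=0 \Leftrightarrow \v\in\cN(\Q)$ for positive semidefinite $\Q$) are standard.
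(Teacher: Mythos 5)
Your proof is correct and follows essentially the same route as the paper's: both reduce the question to the sign of the quadratic form $\v^\top\Q\v$ on $\Vr\cap\mathcal{S}_n$, use compactness of that set to see the infimum is attained, and use $\Q\in\Sy^n_+$ to identify $\{\v:\v^\top\Q\v=0\}$ with $\cN(\Q)\,$. The only cosmetic difference is that you route through the characterization ``$f-\tfrac{\mu}{2}\|\cdot\|_2^2$ convex'' (which the paper states just before the lemma), whereas the paper verifies the defining inequality \eqref{def:SC} directly via the identity $\theta f(\x)+(1-\theta)f(\y)-f(\theta\x+(1-\theta)\y)=\tfrac{1}{2}\theta(1-\theta)(\x-\y)^\top\Q(\x-\y)$ for $\x,\y\in\Vr$.
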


\begin{proof}
Suppose $f$ is strongly convex (with index $\mu>0$) and there exists $\v \in \cN (\Q) \cap \Vr, \, \v \neq \ze$. Letting $\theta=1/2, \x=\v,$ and $\y=\ze$ in \eqref{def:SC}, we arrive at the impossible result:
\begin{equation*}
\frac{1}{2}\c^\top \!\v \leqslant \frac{1}{2}\c^\top \!\v - \frac{1}{8} \mu \|\v\|_2^2
\end{equation*}
Thus, we must have $\cN (\Q) \cap \Vr = \{\ze\}$ if $f$ is strongly convex. 

Conversely, assume that $\cN (\Q) \cap \Vr = \{\ze\}$. We assert that $\mu > 0$. Indeed, if $\mu=0$, we would have  $\v^\top \! \Q\v=0$ for some $\v_0 \in \Vr\cap \mathcal{S}_n$ as the set $\left\{\v^\top \Q \v: \ \v \in \Vr \cap \mathcal{S}_n \right\}$ is closed. Moreover, as $\Q \in \Sy^n_+$, we would have $\v_0 \in \cN(\Q)$, producing a nonzero $\v_0 \in \cN (\Q) \cap \Vr$, contrary to our assumption.  
To verify \eqref{def:SC}, note that it suffices to work with $\x, \y \in \Vr$, the effective domain of $f$. Now, for all  $\x, \y \in \Vr$ and $\theta \in [0,1]$, a simple calculation gives us
\begin{align*}
\theta f(\x) & + (1-\theta) f(\y)  - f( \theta \x + (1-\theta) \y)  \\
& =\frac{1}{2} \theta (1-\theta)  (\x-\y)^\top \! \Q (\x-\y).
\end{align*}
Since $\Vr$ is a subspace of $\Re^n$, $\x-\y \in \Vr$. Hence, we have from \eqref{def:mu} that $ (\x-\y)^\top \! \Q (\x-\y) \geqslant \mu \|\x-\y\|_2^2$. This completes the verification of \eqref{def:SC}. It is also evident from the above analysis that the strong convexity index of $f$ is $\mu$.
\end{proof}

{We are now ready to establish strong convexity of \eqref{eq:recon} when the loss function is \eqref{loss} and the regularizer is \eqref{formula_prox}.}
\begin{theorem}
\label{thm:SC} 
Let $\A \in \Re^{m \times n}$ and $\W \in \Sy^n$, where $\sigma(\W) \subset [0,1]$. Consider the function $f \in \Gamma_0(\Re^n)$ given by
\begin{equation}
\label{obj}
f(\x) =\ell_{\A}(\x) + \lambda\, \varphi_{\W}(\x) 
\end{equation}
{where $\ell_{\A}$  and $\varphi_\W$ are given by  \eqref{loss} and \eqref{formula_prox}}. Then $f$ is strongly convex on $(\Re^n, \langle \cdot, \cdot \rangle_2)$ for any $\lambda >0$ if and only if  $\cN(\A)  \cap \fix (\W) = \{\ze\}$. Moreover, if
\begin{equation}
\label{def:Q}
\Q:=\A^\top \! \A + \lambda \W^{\dagger}  (\I-\W),
\end{equation}
then the strong convexity index of $f$ is
\begin{equation*}
\mu = \inf \, \left\{\v^\top \!  \Q \v: \ \v \in \cR(\W) \cap \mathcal{S}_n \right\}.
\end{equation*}
\end{theorem}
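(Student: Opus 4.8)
The plan is to recognize the objective $f$ in \eqref{obj} as an instance of the quadratic-plus-subspace-indicator function treated in Lemma~\ref{lemma1}, and then to translate its condition $\cN(\Q)\cap\Vr=\{\ze\}$ into the stated fixed-point condition.

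First I would expand the two pieces. Substituting $\ell_{\A}$ from \eqref{loss} and $\varphi_{\W}$ from \eqref{formula_prox} shows that $f$ has the form \eqref{formula:f} with $\Q$ exactly as in \eqref{def:Q}, $\c=-\A^\top\b$, $d=\tfrac12\|\b\|_2^2$, and $\Vr=\cR(\W)$; the indicator term contributes $\lambda\,\iota_{\cR(\W)}$, and since $\lambda>0$ this defines the same constraint set as $\iota_{\cR(\W)}$. To apply Lemma~\ref{lemma1} I need $\Q\in\Sy^n_+$. The term $\A^\top\A$ is PSD, and for $\lambda\,\W^\dagger(\I-\W)$ I would diagonalize $\W$: since $\W\in\Sy^n$ with $\sigma(\W)\subset[0,1]$, both $\W^\dagger$ and $\I-\W$ are functions of $\W$, so $\W^\dagger(\I-\W)$ is symmetric with eigenvalues $(1-\lambda_i)/\lambda_i\ge 0$ on $\cR(\W)$ (the $\lambda_i\in(0,1]$ being the nonzero eigenvalues of $\W$) and $0$ on $\cN(\W)$; hence $\W^\dagger(\I-\W)\in\Sy^n_+$ and $\Q\in\Sy^n_+$. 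Lemma~\ref{lemma1} then yields: $f$ is strongly convex iff $\cN(\Q)\cap\cR(\W)=\{\ze\}$, with strong convexity index $\mu=\inf\{\v^\top\Q\v:\v\in\cR(\W)\cap\mathcal{S}_n\}$, which is the claimed formula.

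It then remains to show $\cN(\Q)\cap\cR(\W)=\cN(\A)\cap\fix(\W)$. Because $\Q\in\Sy^n_+$, a vector lies in $\cN(\Q)$ iff $\v^\top\Q\v=0$. For $\v\in\cR(\W)$ one has the decomposition $\v^\top\Q\v=\|\A\v\|_2^2+\lambda\,\v^\top\W^\dagger(\I-\W)\v$, a sum of nonnegative terms, so it vanishes iff $\A\v=\ze$ and $\v^\top\W^\dagger(\I-\W)\v=0$. Writing the latter in an orthonormal eigenbasis $\{\u_i\}$ of $\W$ spanning $\cR(\W)$, it becomes $\sum_i\frac{1-\lambda_i}{\lambda_i}\langle\v,\u_i\rangle^2=0$, which forces $\langle\v,\u_i\rangle=0$ for every $i$ with $\lambda_i<1$; equivalently $\v$ lies in the eigenspace of $\W$ for eigenvalue $1$, i.e.\ $\v\in\cN(\I-\W)=\fix(\W)$. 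Conversely $\fix(\W)\subseteq\cR(\W)$ and $\W^\dagger(\I-\W)$ annihilates $\fix(\W)$, so the two intersections coincide. Chaining this with the previous paragraph finishes the proof and identifies the index with $\inf\{\v^\top\Q\v:\v\in\cR(\W)\cap\mathcal{S}_n\}$.

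I expect the only real obstacle to be this last step --- specifically, seeing that the degenerate directions of $\W^\dagger(\I-\W)$ within $\cR(\W)$ are precisely $\fix(\W)$, so that ``$\v^\top\Q\v=0$ on $\cR(\W)$'' collapses to ``$\v\in\cN(\A)\cap\fix(\W)$''. Everything else (the expansion, the PSD check, the appeal to Lemma~\ref{lemma1}) is bookkeeping. A minor point to handle with care is that $\fix(\W)$ automatically sits inside $\cR(\W)$, so no extra intersection with $\cR(\W)$ survives on the right-hand side.
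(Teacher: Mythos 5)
Your proposal is correct and follows essentially the same route as the paper: cast $f$ in the form of Lemma~\ref{lemma1} with $\Q$ as in \eqref{def:Q} and $\Vr=\cR(\W)$, verify $\Q\in\Sy^n_+$, and reduce the condition $\cN(\Q)\cap\cR(\W)=\{\ze\}$ to $\cN(\A)\cap\fix(\W)=\{\ze\}$. The only (cosmetic) difference is that you identify $\cN\big(\W^{\dagger}(\I-\W)\big)\cap\cR(\W)=\fix(\W)$ via the spectral decomposition of $\W$, whereas the paper does it with the Moore--Penrose identity $\W\W^{\dagger}\W=\W$; both are sound.
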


\begin{proof}
  {To use Lemma~\ref{lemma1}, we write \eqref{obj} as in \eqref{formula:f}, where $\Q$ is given by \eqref{def:Q}, $\c = \boldsymbol{0}, d = 0$, and $\Vr = \cR(\W)$}. The desired result would follow from Lemma~\ref{lemma1} if we can show that $\cN (\Q) \cap \Vr  = \{\ze\}$.

Now, since $\W \in \Sy^n$ and $\sigma(\W) \subset [0,1]$, we have $\W^\dagger \in \Sy^n$ and $\sigma(\W^\dagger) \subset [0,1]$, so that $\W^{\dagger}  (\I-\W) \in \Sy^n_+$. In particular, this means $\Q \in \Sy^n_+$, and
\begin{align}
\label{eq3}
\cN(\Q)=\cN(\A) \cap \cN(\W^{\dagger}  (\I-\W)).
\end{align}
Therefore, 
\begin{align*}
\cN (\Q) \cap  \Vr   = \cN(\A) \cap \cN(\W^{\dagger}  (\I-\W)) \cap \cR(\W).
\end{align*}
We claim that $\cN(\W^{\dagger}  (\I-\W)) \cap \cR(\W)=\fix(\W)$. Indeed, if $\W\x=\x$, then  trivially $\W^{\dagger}  (\I-\W)\x=\ze $ and $\x \in \cR(\W)$. In the other direction, if $\W^{\dagger}  (\I-\W)\x=\ze$ and $\x = \W \z$ for some $\z \in \Re^n$, we would have $\W^{\dagger}\W\z= \W^{\dagger}\W^2 \z = \W\z=\x$. Multiplying this by $\W$, we get $\x=\W\x$, establishing our claim. Combining this with \eqref{eq3}, we have $\cN (\Q) \cap  \Vr = \cN(\A) \cap \fix(\W) = \{\ze\}$, establishing our claim.
\end{proof}

We apply Thm.~\ref{thm:SC} to inpainting, deblurring, and superresolution. We assume that at least one pixel is sampled for inpainting, and the blur is nonnegative (and nonzero) for deblurring and superresolution; these conditions are typically met in practice. 

\begin{corollary}
\label{cor:sym}
Let $\W \in \Sy^n_+$ be stochastic and irreducible, and let $\A$ be the forward model corresponding to inpainting, deblurring, or superresolution. Then optimization \eqref{eq:recon} with $\ell=\ell_{\A}$ and $\varphi=\varphi_{\W}$ is strongly convex on $(\Re^n, \langle \cdot, \cdot \rangle_2)$.
\end{corollary}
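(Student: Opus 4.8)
The plan is to reduce everything to Theorem~\ref{thm:SC}, whose hypothesis for strong convexity of \eqref{eq:recon} (with $\ell=\ell_\A$, $\varphi=\varphi_\W$) is the single condition $\cN(\A) \cap \fix(\W) = \{\ze\}$. So the proof splits into two steps: first, identify $\fix(\W)$ explicitly from the structure of $\W$; second, check that $\cN(\A)$ meets $\fix(\W)$ trivially for each of the three forward models.

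For the first step I would invoke Perron--Frobenius theory. Since $\W \in \Sy^n_+$, all eigenvalues of $\W$ are nonnegative; since $\W$ is stochastic, $\W\one = \one$, so $1 \in \sigma(\W)$ and the spectral radius of $\W$ is exactly $1$. As $\W$ is an irreducible nonnegative matrix, the Perron--Frobenius theorem guarantees that its spectral radius is a \emph{simple} eigenvalue. Hence the eigenspace of $\W$ for the eigenvalue $1$ is one-dimensional, and since $\one$ lies in it, $\fix(\W) = \Span\{\one\}$. (Note also that $\sigma(\W) \subset [0,1]$ here, so Proposition~\ref{prop:formula_prox} applies and $\varphi_\W$ is well defined.)

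For the second step it suffices to verify $\one \notin \cN(\A)$, i.e.\ $\A\one \neq \ze$, in each case. For inpainting, $\A$ is a coordinate-sampling operator; if at least one pixel is sampled, the corresponding entry of $\A\one$ equals $1$, so $\A\one \neq \ze$. For deblurring, $\A$ is convolution with a nonnegative, nonzero blur kernel $\h$, so $\A\one = \big(\sum_i h_i\big)\one$ with $\sum_i h_i > 0$, hence $\A\one \neq \ze$. For superresolution, $\A = \S\H$ with $\H$ such a convolution and $\S$ a subsampling operator; then $\A\one = \big(\sum_i h_i\big)\,\S\one$, and $\S\one$ is the lower-dimensional all-ones vector, which is nonzero. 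In all three cases $\cN(\A) \cap \fix(\W) = \cN(\A) \cap \Span\{\one\} = \{\ze\}$, and Theorem~\ref{thm:SC} yields the claim.

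The only mildly delicate point is the first step: one must note that stochasticity pins the spectral radius to $1$ (so the Perron eigenvalue is precisely the eigenvalue $1$ with eigenvector $\one$) and that it is irreducibility that forces this eigenvalue to be simple --- symmetry and positive semidefiniteness alone would not preclude a higher-dimensional fixed-point space. Everything else is a routine case check on $\A\one$.
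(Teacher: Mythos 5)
Your proof is correct and follows essentially the same route as the paper: both reduce to the condition $\cN(\A)\cap\fix(\W)=\{\ze\}$ of Theorem~\ref{thm:SC}, use the Perron--Frobenius theorem with irreducibility to pin down $\fix(\W)=\Span\{\e\}$, and then verify $\A\e\neq\ze$ case by case for the three forward models. Your write-up is somewhat more explicit about why the Perron eigenvalue is exactly $1$ and about the computation of $\A\e$ for superresolution, but the argument is the same.
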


\begin{proof}
Since $\W \in \Sy^n_+$ and $\W\e=\e$ ($\e$ is the all-ones vector), we have $\sigma(\W) \subset [0,1]$. Thus, $\W$ satisfies the hypothesis in Thm.~\ref{thm:SC}. On the other hand, we can conclude from $\W\e=\e$ that $\mathrm{span}\{\e\} \subset \fix(\W)$. In fact, since $\W$ is irreducible, we can conclude from the Perron-Frobenius theorem \cite{meyer2000matrix} that $ \fix(\W) = \mathrm{span}\{\e\}$. Consequently, to show that $\cN(\A) \cap \fix(\W)=\{\ze\}$, it suffices to check $\A \e  \neq \ze$. It is clear that $\A \e  \neq \ze$ for inpainting and deblurring. Moreover, $\A$ is a lowpass blur followed by downsampling in superresolution, so $\A \e  \neq \ze$. Thus, $\cN(\A) \cap \fix(\W)=\{\ze\}$ for inpainting, deblurring, and superresolution. It follows from Thm.~\ref{thm:SC} that \eqref{eq:recon} with $\varphi=\varphi_{\W}$  is strongly convex on $(\Re^n, \langle \cdot, \cdot \rangle_2)$. 
\end{proof}

\subsection{Kernel denoiser}

We next extend Thm.~\ref{thm:SC} to kernel denoisers of the form $\W = \D^{-1} \K$. As noted earlier, $\W$ is generally nonsymmetric, and it follows from Thm.~\ref{Moreau} that  $\W$ is not a proximal operator on $(\Re^n, \langle \cdot, \cdot \rangle_2)$. However, we can write  
\begin{equation}
\label{def:Ws}
\W = \D^{-\frac{1}{2}} \W_s \D^{\frac{1}{2}}, \quad \W_s := \D^{-\frac{1}{2}} \K \D^{-\frac{1}{2}}.
\end{equation}
That is, $\W$ is similar to $\W_s \in \Sy^n$. Hence, $\sigma(\W_s) = \sigma(\W) \subset [0,1]$. We know from Proposition~\ref{prop:formula_prox} that $\W_s$ is the proximal operator of ${\varphi_{\W_s}}$ given by \eqref{formula_prox}. We use this to construct the regularizer associated with $\W$.

\begin{proposition}
\label{prop:proxkernelW}
Let $\W$ be a kernel denoiser. Define $\W_s$ as in \eqref{def:Ws} and the corresponding regularizer $\varphi_{\W_s}$ in \eqref{formula_prox}. Define $\phi_{\W} \in \Gamma_0(\Re^n)$ to be
\begin{equation}
\label{def:proxWker}
\phi_{\W}(\x) =  \varphi_{\W_s}(\D^{\frac{1}{2}} \x).
\end{equation}
Then {$\W = \prox_{\phi_\W, \| \cdot \|_{\D}}$}, where $\| \cdot \|_{\D}$ is induced by \eqref{ip}. In other words, $\W$ is a proximal operator on $(\Re^n, \langle \cdot , \cdot \rangle_{\D})$.
\end{proposition}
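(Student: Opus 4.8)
The plan is to verify the identity $\W = \prox_{\phi_\W,\|\cdot\|_\D}$ directly, by performing a change of variables that turns the $\D$-weighted proximal problem for $\phi_\W$ into a standard (dot-product) proximal problem for $\varphi_{\W_s}$, and then invoking Proposition~\ref{prop:formula_prox}. Since $\D = \diag(\K\e)$ is diagonal with strictly positive entries, $\D^{1/2}$ and $\D^{-1/2}$ are well-defined, symmetric, and mutually inverse; this is the only structural fact about $\D$ that the argument needs.

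First I would record that $\phi_\W \in \Gamma_0(\Re^n)$, as asserted in the statement: $\varphi_{\W_s} \in \Gamma_0(\Re^n)$ by Proposition~\ref{prop:formula_prox} (applied to $\W_s$, which is legitimate because $\W_s \in \Sy^n$ and $\sigma(\W_s) = \sigma(\W) \subset [0,1]$ by \eqref{def:Ws}), and precomposition with the linear bijection $\x \mapsto \D^{1/2}\x$ preserves properness, closedness, and convexity. Then, for fixed $\x \in \Re^n$, I would expand the definition:
\begin{equation*}
\prox_{\phi_\W,\|\cdot\|_\D}(\x) = \argmin_{\z \in \Re^n} \ \tfrac{1}{2}(\z-\x)^\top \D (\z-\x) + \varphi_{\W_s}\!\big(\D^{1/2}\z\big).
\end{equation*}
Substituting $\u = \D^{1/2}\z$ (a bijective reparametrization) and using $\D^{-1/2}\D\D^{-1/2} = \I$, the quadratic term becomes $(\z - \x)^\top \D (\z - \x) = \|\u - \D^{1/2}\x\|_2^2$, so the minimizer in $\u$ is
\begin{equation*}
\u^\star = \argmin_{\u \in \Re^n} \ \tfrac{1}{2}\big\|\u - \D^{1/2}\x\big\|_2^2 + \varphi_{\W_s}(\u) = \prox_{\varphi_{\W_s},\|\cdot\|_2}\!\big(\D^{1/2}\x\big) = \W_s \D^{1/2}\x,
\end{equation*}
the last step being Proposition~\ref{prop:formula_prox} for $\W_s$. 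Undoing the substitution gives $\z^\star = \D^{-1/2}\u^\star = \D^{-1/2}\W_s\D^{1/2}\x = \W\x$ by \eqref{def:Ws}, which is exactly the claimed identity.

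I do not expect a genuine obstacle: the proposition is essentially a transfer of Proposition~\ref{prop:formula_prox} along the similarity $\W = \D^{-1/2}\W_s\D^{1/2}$, and the congruence $\D$ in the quadratic form is matched by the weighting in $\|\cdot\|_\D$ precisely so that the cross terms cancel. The only places calling for care are (i) checking that $\phi_\W$ remains in $\Gamma_0(\Re^n)$ after composing with $\D^{1/2}$ (immediate, since $\D^{1/2}$ is invertible), and (ii) the bookkeeping in the change of variables so that the weighted quadratic collapses exactly to the Euclidean norm; both are routine given that $\D$ is diagonal and positive definite.
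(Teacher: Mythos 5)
Your proposal is correct and follows essentially the same route as the paper's own proof: both perform the substitution $\u = \D^{1/2}\z$ to convert the $\D$-weighted proximal problem for $\phi_\W$ into the Euclidean proximal problem for $\varphi_{\W_s}$, invoke Proposition~\ref{prop:formula_prox} to identify the minimizer as $\W_s\D^{1/2}\x$, and undo the substitution using \eqref{def:Ws}. Your additional remark verifying $\phi_\W \in \Gamma_0(\Re^n)$ is a small bonus the paper leaves implicit.
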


\begin{proof}
Fix $\x \in \Re^n$. Following \eqref{def:proxWker} and Definition~\ref{defProx}, we need to verify that $\W\x$ is the minimizer of $h \in \Gamma_0(\Re^n)$ given by
\begin{align*}
h(\z) = \frac{1}{2} \| \D^{\frac{1}{2}}(\z-\x)\|_2^2 + \varphi_{\W_s}(\D^{\frac{1}{2}} \z),
\end{align*}
where we have used the fact that $\| \cdot \|_{\D}=\| \D^{\frac{1}{2}} \cdot \|_2$. Performing the substitution $\y=\D^{\frac{1}{2}} \z$, we get
\begin{equation}
\label{eq1}
\underset{\z \in \Re^n}{\arg \min} \ h(\z) = \D^{-\frac{1}{2}} \Big(\underset{\y \in \Re^n}{\arg \min} \ g(\y) \Big),
\end{equation}
where
\begin{equation*}
g(\y):= h(\D^{-\frac{1}{2}} \y) = \frac{1}{2} \| \y- \D^{\frac{1}{2}} \x\|_2^2 + \varphi_{\W_s}(\y).
\end{equation*}
However, from Proposition~\ref{prop:formula_prox} and Definition~\ref{defProx}, we have 
\begin{equation*}
\underset{\y \in \Re^n}{\arg \min} \ g(\y) =  \prox_{\phi_{\W},\| \cdot \|_2}(\D^{\frac{1}{2}} \x)= \W_s (\D^{\frac{1}{2}} \x).
\end{equation*}
The desired result follows from \eqref{def:Ws} and \eqref{eq1}.
\end{proof}

Having identified the regularizer associated with a kernel denoiser, we next analyze when the optimization problem is strongly convex on $(\Re^n, \langle \cdot, \cdot \rangle_{\D})$.

\begin{theorem}
\label{thm:SCnonsym}
Let $\A \in \Re^{m \times n}$ and $\W \in \Re^{n \times n}$ be a kernel denoiser. For $\lambda > 0$, let $f \in \Gamma_0(\Re^n)$ be 
\begin{equation}
\label{def:f}
f(\x) =\ell_{\A}(\x) + \lambda \, \phi_{\W}(\x).
\end{equation}
Then $f$ is strongly convex on $(\Re^n, \langle \cdot, \cdot \rangle_{\D})$ if and only if 
$\cN(\A)  \cap  \fix (\W) = \{\ze\}$.
\end{theorem}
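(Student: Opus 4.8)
The plan is to reduce Theorem~\ref{thm:SCnonsym} to Theorem~\ref{thm:SC} by exploiting the change-of-inner-product trick already used in the proof of Proposition~\ref{prop:proxkernelW}. Working on $(\Re^n,\langle\cdot,\cdot\rangle_\D)$ is the same as working on $(\Re^n,\langle\cdot,\cdot\rangle_2)$ after the linear change of variables $\y=\D^{1/2}\x$, since $\|\x\|_\D=\|\D^{1/2}\x\|_2$. So first I would define $\tilde f(\y):=f(\D^{-1/2}\y)$ and observe that $f$ is strongly convex on $(\Re^n,\langle\cdot,\cdot\rangle_\D)$ if and only if $\tilde f$ is strongly convex on $(\Re^n,\langle\cdot,\cdot\rangle_2)$ --- this is immediate from Definition~\ref{def:SC} because the change of variables is a linear isometry between the two Euclidean spaces, so it preserves inequality~\eqref{def:SC} verbatim (with the same $\mu$).

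Next I would compute $\tilde f$ explicitly. By \eqref{def:proxWker}, $\phi_\W(\D^{-1/2}\y)=\varphi_{\W_s}(\y)$, and $\ell_\A(\D^{-1/2}\y)=\tfrac12\|\A\D^{-1/2}\y-\b\|_2^2=\ell_{\A\D^{-1/2}}(\y)$. Hence
\begin{equation*}
\tilde f(\y)=\ell_{\tilde\A}(\y)+\lambda\,\varphi_{\W_s}(\y),\qquad \tilde\A:=\A\D^{-1/2}.
\end{equation*}
This is precisely the object in Theorem~\ref{thm:SC} with forward operator $\tilde\A$ and symmetric denoiser $\W_s$ (which satisfies $\W_s\in\Sy^n$ and $\sigma(\W_s)\subset[0,1]$ by \eqref{def:Ws} and the discussion after it). Therefore Theorem~\ref{thm:SC} applies and tells us $\tilde f$ is strongly convex on $(\Re^n,\langle\cdot,\cdot\rangle_2)$ if and only if $\cN(\tilde\A)\cap\fix(\W_s)=\{\ze\}$.

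The remaining step is to translate the condition $\cN(\A\D^{-1/2})\cap\fix(\W_s)=\{\ze\}$ back into $\cN(\A)\cap\fix(\W)=\{\ze\}$. This is where the bookkeeping lives, but it is routine because $\D^{1/2}$ is invertible: $\y\in\cN(\A\D^{-1/2})$ iff $\D^{-1/2}\y\in\cN(\A)$, and $\y\in\fix(\W_s)$ iff $\W_s\y=\y$ iff (using $\W=\D^{-1/2}\W_s\D^{1/2}$) $\W(\D^{-1/2}\y)=\D^{-1/2}\y$, i.e.\ $\D^{-1/2}\y\in\fix(\W)$. Thus the map $\y\mapsto\D^{-1/2}\y$ is a linear bijection carrying $\cN(\tilde\A)\cap\fix(\W_s)$ onto $\cN(\A)\cap\fix(\W)$; one is trivial iff the other is. Chaining the three equivalences gives the theorem, and as a bonus one reads off the strong convexity index as $\mu=\inf\{\v^\top(\tilde\A^\top\tilde\A+\lambda\W_s^\dagger(\I-\W_s))\v:\v\in\cR(\W_s)\cap\mathcal S_n\}$.

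I do not expect a genuine obstacle here: the whole proof is an isometric change of variables plus Theorem~\ref{thm:SC}. The one point that needs a line of care is confirming that $\W_s$ genuinely meets the hypotheses of Theorem~\ref{thm:SC} --- namely $\W_s\in\Sy^n$ (clear from $\W_s=\D^{-1/2}\K\D^{-1/2}$ with $\K\in\Sy^n_+$) and $\sigma(\W_s)\subset[0,1]$ (from similarity $\sigma(\W_s)=\sigma(\W)$ together with the fact that a kernel denoiser $\W=\D^{-1}\K$ with $\K\in\Sy^n_+$, $\D=\diag(\K\e)$ is row-stochastic with nonnegative entries, hence $\sigma(\W)\subset[0,1]$), both of which were already noted in the paragraph preceding Proposition~\ref{prop:proxkernelW}. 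Everything else is mechanical.
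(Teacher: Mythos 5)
Your proposal is correct and is essentially the same argument as the paper's: the paper likewise defines $g(\x)=f(\D^{-1/2}\x)$, rewrites it as $\ell_{\A\D^{-1/2}}+\lambda\varphi_{\W_s}$, invokes Thm.~\ref{thm:SC}, and translates the condition back via $\cN(\A\D^{-1/2})\cap\fix(\W_s)=\D^{1/2}\big(\cN(\A)\cap\fix(\W)\big)$. The only cosmetic difference is that you additionally record the resulting strong convexity index, which the paper omits here.
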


\begin{proof}
We can deduce this from Thm.~\ref{thm:SC}. Indeed, note that \eqref{def:f} is strongly convex on $(\Re^n, \langle \cdot, \cdot \rangle_{\D})$ if and only if $g(\x):=f(\D^{-\frac{1}{2}} \x)$ is strongly convex on $(\Re^n, \langle \cdot, \cdot \rangle_2)$. Now, it follows from \eqref{loss} and \eqref{def:proxWker} that
\begin{align*}
g(\x) & = \ell_{\A}(\D^{-\frac{1}{2}} \x) + \lambda \, \phi_{\W}(\D^{-\frac{1}{2}} \x) \\
& = \ell_{\A \D^{-1/2}}(\x) + \lambda \, \varphi_{\W_s}(\x). 
\end{align*}

We are now in the setting of Thm.~\ref{thm:SC}. In particular,  $g$ is strongly convex on $(\Re^n, \langle \cdot, \cdot \rangle_2)$ if and only if 
\begin{equation}
\label{eq2}
\cN(\A \D^{-\frac{1}{2}}) \cap \fix(\W_s)=\{\ze \}.
\end{equation}
However, $\cN(\A \D^{-\frac{1}{2}})  = \D^{\frac{1}{2}} \cN(\A)$. Moreover, we have from \eqref{def:Ws} that $\fix(\W_s) = \D^{\frac{1}{2}}  \fix(\W)$.
Consequently,
\begin{align*}
\cN(\A \D^{-\frac{1}{2}}) \cap \fix(\W_s) = \D^{\frac{1}{2}}  \big(\cN(\A) \cap  \fix(\W) \big).
\end{align*}
Since $\cN(\A)  \cap  \fix (\W) = \{\ze\}$ by assumption, this verifies \eqref{eq2} and completes the proof.
\end{proof}

Similar to Corollary~\ref{cor:sym}, we can deduce the following result from Thm.~\ref{thm:SCnonsym}.  

\begin{corollary}
\label{cor:kernel}
Let $\W$ be a kernel denoiser and $\A$ correspond to inpainting, deblurring, or superresolution. Then  \eqref{eq:recon} with $\ell=\ell_{\A}$ and $\varphi=\phi_{\W}$ is strongly convex on $(\Re^n, \langle \cdot, \cdot \rangle_{\D})$.
\end{corollary}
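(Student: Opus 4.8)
The plan is to obtain Corollary~\ref{cor:kernel} from Theorem~\ref{thm:SCnonsym} in exactly the way Corollary~\ref{cor:sym} is obtained from Theorem~\ref{thm:SC}: by Theorem~\ref{thm:SCnonsym}, strong convexity of \eqref{eq:recon} on $(\Re^n, \langle \cdot, \cdot \rangle_{\D})$ with $\ell = \ell_{\A}$ and $\varphi = \phi_{\W}$ is equivalent to $\cN(\A) \cap \fix(\W) = \{\ze\}$, so the whole task reduces to verifying this intersection condition for each of the three forward models. Note that, unlike in Theorem~\ref{thm:SC}, there is no additional spectral condition to check here: being a kernel denoiser $\W = \D^{-1}\K$ is precisely the hypothesis of Theorem~\ref{thm:SCnonsym}, and the bound $\sigma(\W) \subset [0,1]$ has already been recorded via the similarity $\W = \D^{-1/2}\W_s\D^{1/2}$ with $\W_s \in \Sy^n$.

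Next I would pin down $\fix(\W)$. Since $\K$ is nonnegative with positive diagonal, $\D = \diag(\K\e)$ has strictly positive entries, so $\W = \D^{-1}\K$ is a nonnegative matrix with the same sparsity pattern as $\K$, and $\W\e = \D^{-1}\K\e = \e$; thus $\W$ is row-stochastic and $\Span\{\e\} \subseteq \fix(\W)$. Because $\K$ is irreducible, so is $\W$, and the Perron--Frobenius theorem \cite{meyer2000matrix} gives that the spectral radius $1$ is a simple eigenvalue of $\W$, whence $\fix(\W) = \Span\{\e\}$. Equivalently, $\W\x = \x \iff (\D - \K)\x = \ze$, and $\D - \K$ is a symmetric positive semidefinite graph Laplacian whose null space is one-dimensional precisely when the underlying graph is connected, i.e.\ $\K$ irreducible.

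It then remains to check $\A\e \neq \ze$ for inpainting, deblurring, and superresolution, which is identical to the corresponding step in the proof of Corollary~\ref{cor:sym}: for inpainting $\A$ has a row equal to a standard basis vector (at least one sampled pixel), for deblurring $\A$ is convolution with a nonnegative nonzero kernel so $\A\e$ has positive entries, and for superresolution $\A$ is such a blur followed by downsampling, so again $\A\e \neq \ze$. In each case $\cN(\A) \cap \fix(\W) = \cN(\A) \cap \Span\{\e\} = \{\ze\}$, and Theorem~\ref{thm:SCnonsym} yields the claim. The only real subtlety --- and hence the main point to get right --- is the computation of $\fix(\W)$ for the \emph{nonsymmetric} stochastic matrix $\W$, where one must lean on Perron--Frobenius (or the Laplacian/connectivity picture) rather than a symmetric eigen-argument, and where the standard irreducibility assumption on the kernel must be in force; everything after that is bookkeeping already done for Corollary~\ref{cor:sym}.
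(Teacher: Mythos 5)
Your proposal is correct and follows essentially the same route the paper intends: the paper gives no explicit proof of Corollary~\ref{cor:kernel}, merely noting it follows from Thm.~\ref{thm:SCnonsym} ``similar to Corollary~\ref{cor:sym},'' and you fill in exactly the natural details --- reduce to $\cN(\A)\cap\fix(\W)=\{\ze\}$, identify $\fix(\W)=\Span\{\e\}$ via Perron--Frobenius (or the Laplacian $\D-\K$) for the row-stochastic irreducible $\W$, and reuse the $\A\e\neq\ze$ check. Your observation that irreducibility of $\K$ must be assumed (it is explicit in Corollary~\ref{cor:sym} but only implicit here) is a fair and accurate reading of the hypotheses.
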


\section{Convergence Analysis}
\label{convg}

We now explain how Corollaries~\ref{cor:sym} and \ref{cor:kernel} can be used to establish convergence of any PnP algorithm derived from a proximal method. We have seen that the updates in \eqref{eq:pnp-ista} correspond to running ISTA in $(\Re^n, \langle \cdot, \cdot \rangle_2)$ for symmetric denoisers. On the other hand, if we replace the gradient in \eqref{eq:pnp-ista} with \eqref{gradD}, then PnP-ISTA corresponds to running classical ISTA in $(\Re^n, \langle \cdot, \cdot \rangle_{\D})$ for kernel denoisers. Henceforth, we will not distinguish between symmetric and kernel denoisers, assuming that the gradient is calculated as explained above. The following is the main result of this note.

\begin{theorem}
\label{thm:convg}
Let $\W$ be a linear denoiser as in Corollaries~\ref{cor:sym} and \ref{cor:kernel}, and let $\A$ correspond to inpainting, deblurring, or superresolution. Then any PnP algorithm, with $\W$ as regularizer, corresponds to solving a convex optimization problem of the form \eqref{eq:recon}. Moreover, we can guarantee objective and iterate convergence of the PnP iterates $\{\x_k\}$, i.e., $\ell(\x_k) + \lambda \varphi(\x_k) \to \nu$ and $\x_k \to \x^*$, where $\nu$ and $\x^*$ are the minimum value and the (unique) minimizer of \eqref{eq:recon}.
\end{theorem}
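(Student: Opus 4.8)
The plan is to combine three facts established earlier in the paper with one standard fact from convex optimization. Fix a PnP algorithm --- PnP-ISTA, PnP-FISTA, PnP-ADMM, or PnP-HQS, or one of their scaled variants --- with denoiser $\W$. If $\W$ is symmetric, set $(\blE,\langle\cdot,\cdot\rangle)=(\Re^n,\langle\cdot,\cdot\rangle_2)$ and $\varphi=\varphi_{\W}$ as in \eqref{formula_prox}; if $\W=\D^{-1}\K$ is a kernel denoiser, set $(\blE,\langle\cdot,\cdot\rangle)=(\Re^n,\langle\cdot,\cdot\rangle_{\D})$ and $\varphi=\phi_{\W}$ as in \eqref{def:proxWker}. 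By Proposition~\ref{prop:formula_prox} (resp. Proposition~\ref{prop:proxkernelW}), $\W=\prox_{\varphi,\|\cdot\|}$ in $(\blE,\langle\cdot,\cdot\rangle)$, and --- because the gradient in the scaled update \eqref{gradD} is precisely the gradient of $\ell_{\A}$ with respect to $\langle\cdot,\cdot\rangle$ --- the PnP update is literally the classical proximal algorithm applied to $\min_{\x\in\blE}\,\ell_{\A}(\x)+\lambda\varphi(\x)$ in the Euclidean space $(\blE,\langle\cdot,\cdot\rangle)$, for the value of $\lambda>0$ induced by the algorithm's step size / penalty parameter. This establishes the first assertion of the theorem.

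Next I would record that $f=\ell_{\A}+\lambda\varphi$ is strongly convex on $(\blE,\langle\cdot,\cdot\rangle)$: for $\A$ arising from inpainting, deblurring, or superresolution this is exactly Corollary~\ref{cor:sym} (symmetric case) or Corollary~\ref{cor:kernel} (kernel case), which hold for every $\lambda>0$. Let $\mu>0$ be the strong convexity index; by Theorem~\ref{thm:existenceunique} there is a unique minimizer $\x^*$, and put $\nu=f(\x^*)$. I would then check the smoothness hypothesis needed by the classical convergence theorems: $\ell_{\A}$ is a convex quadratic, so its gradient (w.r.t. the relevant inner product) is globally Lipschitz --- with constant $\|\A^\top\!\A\|_2$ in $(\Re^n,\langle\cdot,\cdot\rangle_2)$ and $\|\D^{-1/2}\A^\top\!\A\D^{-1/2}\|_2<\infty$ in $(\Re^n,\langle\cdot,\cdot\rangle_{\D})$ --- so an admissible step size (ISTA/FISTA) or penalty parameter (ADMM/HQS) exists. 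With these hypotheses in force, the standard convergence theory for the classical proximal method in question (see \cite{beck2017book}) gives objective convergence $f(\x_k)\to\nu$; crucially, this theory is stated in an abstract Euclidean space, hence applies verbatim to the non-standard inner product $\langle\cdot,\cdot\rangle_{\D}$.

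Finally, I would upgrade objective convergence to iterate convergence using strong convexity alone. Since $\x^*$ minimizes $f$ and $f$ is $\mu$-strongly convex, we have the quadratic growth bound $f(\x)-\nu\geqslant\tfrac{\mu}{2}\|\x-\x^*\|^2$ for all $\x\in\blE$ (a standard consequence of $0\in\partial f(\x^*)$); hence $\|\x_k-\x^*\|^2\leqslant\tfrac{2}{\mu}\big(f(\x_k)-\nu\big)\to 0$. As all norms on $\Re^n$ are equivalent, this is ordinary convergence $\x_k\to\x^*$ of the image sequence, completing the proof.

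I do not expect a genuine obstacle: the routine part is the per-algorithm invocation of classical convergence, and the only things to get right are the inner-product bookkeeping for kernel denoisers --- confirming that ``proximal operator'', ``gradient'', and ``Lipschitz constant'' are all read with respect to $\langle\cdot,\cdot\rangle_{\D}$ so that the textbook statements transfer unchanged --- and the mild caveat that the splitting-based methods (ADMM, HQS) require their usual qualification conditions, which hold here because \eqref{eq:recon} is the sum of a finite-valued smooth convex function and a proper closed convex function that attains its minimum. The real content of the theorem is simply that strong convexity is the extra ingredient converting the always-available objective convergence of a classical proximal method into iterate convergence to the unique minimizer.
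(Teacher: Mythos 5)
Your proposal is correct, and its overall skeleton matches the paper's: identify the right Euclidean space and regularizer so that the PnP scheme is literally a classical proximal method applied to \eqref{eq:recon}, invoke Corollaries~\ref{cor:sym}/\ref{cor:kernel} for strong convexity and Theorem~\ref{thm:existenceunique} for the unique minimizer, and get objective convergence from the standard theory (whose only implicit hypothesis, existence of a minimizer, is now guaranteed). Where you genuinely diverge is the last step. The paper upgrades objective convergence to iterate convergence by a compactness argument: strong convexity implies coercivity, hence the iterates are bounded; Bolzano--Weierstrass gives subsequential limits; lower semicontinuity of $f$ forces every such limit to be a minimizer; and uniqueness of the minimizer collapses them all to $\x^*$. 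You instead use the quadratic growth inequality $f(\x)-\nu\geqslant\tfrac{\mu}{2}\|\x-\x^*\|^2$ (valid since $\ze\in\partial f(\x^*)$ and $f$ is $\mu$-strongly convex), which yields $\|\x_k-\x^*\|^2\leqslant\tfrac{2}{\mu}\big(f(\x_k)-\nu\big)\to 0$ directly. Your route is shorter and quantitative --- it converts any objective convergence rate into an iterate convergence rate with explicit constant $2/\mu$ --- whereas the paper's route uses only the qualitative consequences of strong convexity (coercivity and uniqueness) and so would survive under weaker hypotheses such as strict convexity plus coercivity. Your additional bookkeeping on Lipschitz constants and admissible step sizes in the $\langle\cdot,\cdot\rangle_{\D}$ geometry is not in the paper's proof but is a legitimate (and slightly more careful) way of justifying the appeal to the textbook convergence theorems. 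Both arguments are sound.
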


Thm.~\ref{thm:convg} can be deduced from Corollaries~\ref{cor:sym} and \ref{cor:kernel} and standard objective convergence results for proximal algorithms. The proof is deferred to the Appendix. 

We note that in our prior work \cite{ACK2023-contractivity,athalye2023corrections}, iterate convergence was established for PnP-ISTA and PnP-ADMM, but the focus was on symmetric denoisers. On the other hand, Thm.~\ref{thm:convg} provides a unified analysis for symmetric and kernel denoisers, extending the results in \cite{ACK2023-contractivity}. Moreover, the convergence guarantee in Thm.~\ref{thm:convg} covers any PnP algorithm in principle, not just PnP-ISTA and PnP-ADMM.  We remark that the condition $\cN(\A) \cap \fix(\W) =\{\ze\}$ in Corollaries~\ref{cor:sym} and \ref{cor:kernel} was used in \cite{ACK2023-contractivity} to deduce linear convergence of the iterates of PnP-ISTA and PnP-ADMM.

\section{Numerical Validation}
\label{exp}

{We verify the strong convexity of $f$ in \eqref{def:f} using inpainting ($30\%$ random samples) and deblurring ($7\times 7$ uniform blur) experiments, where $\W$ is the symmetric DSG-NLM denoiser \cite{sreehari2016plug}.
The exact computation of the strong convexity index $\mu$ in \eqref{def:SC} will require the full basis information of $\mathcal{R}(\W)$, which is computationally infeasible in our case as $\W$ has an enormous size and a high rank. As an alternative, we have opted
for a lower bound on $\mu$. This was found to be $0.01$ for inpainting and $0.0005$ for deblurring. Details on the numerical computation of $\mu$ are provided in the Appendix. For visualization purposes, plots of the section $t \mapsto g(t \v_0)$ are shown in Fig.~\ref{fig:cvx}, where $\v_0 \in \cR(\W)$ is chosen randomly. The convex behavior of the section serves as a direct indication of the strong convexity of $f$, even though $\mu$ is underestimated~\cite[Thm. 5.27]{beck2017book}. We remark that the $g(t\v_0)$ is a quadratic function of $t$ for this choice of $\v_0$ (see~\eqref{formula_prox}). }
\begin{figure}[!t]
    \centering
    \includegraphics[width=0.7\linewidth]{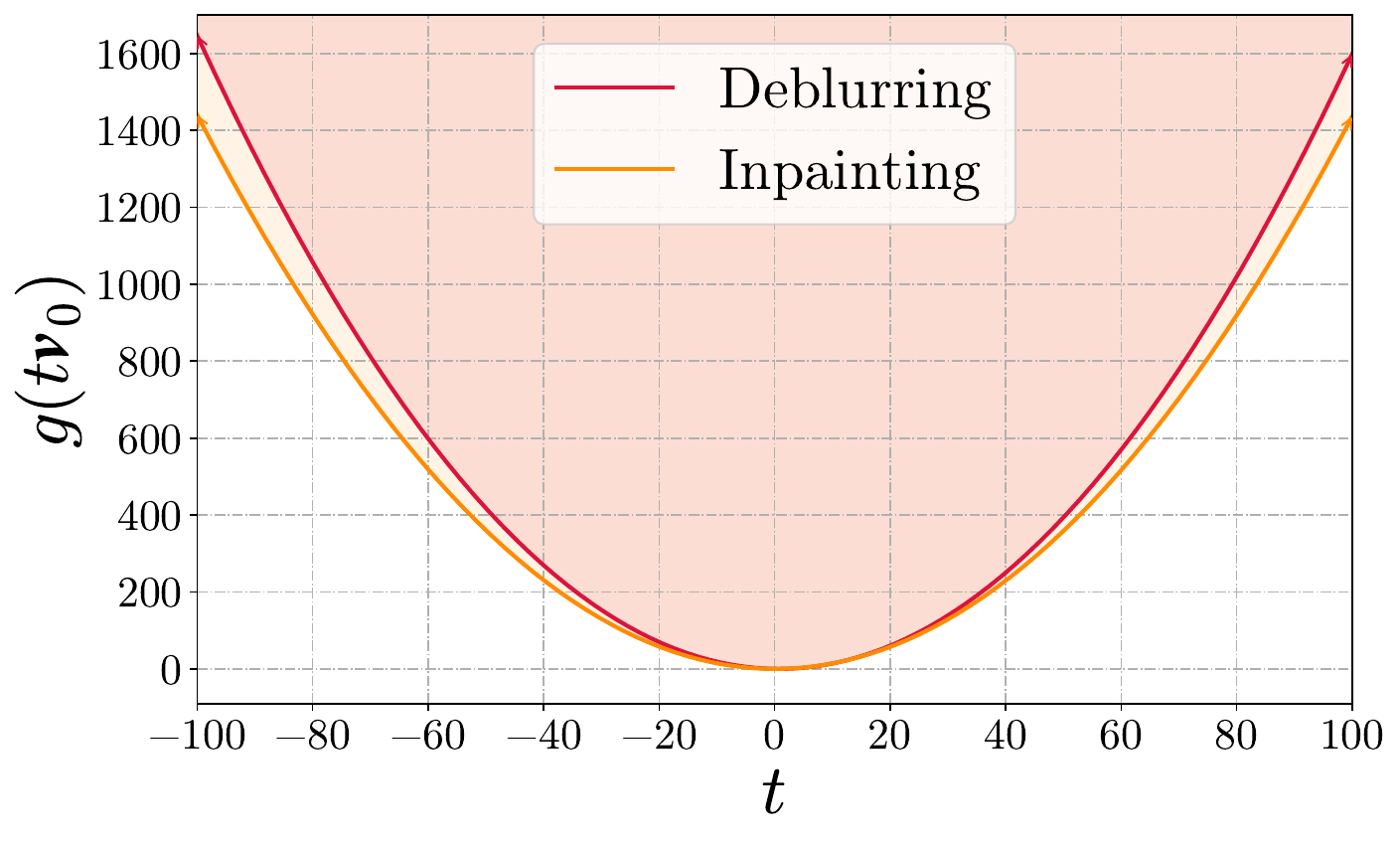}
    \caption{Evidence of the convexity of $g(t \v_0)= f(t \v_0) - (\mu/2)\|t \v_0\|_2^2$ for the inpainting and deblurring problems in Sec.~\ref{exp}. }
    \label{fig:cvx}
\end{figure}

\begin{figure}[t]
\centering
\subfloat[observed]{\label{fig:a}\includegraphics[width = 0.3\textwidth]{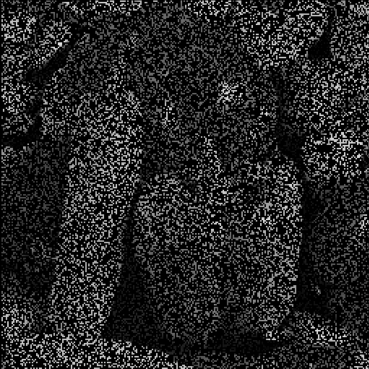}}
\hspace{1pt}
\subfloat[DSG-NLM]{\label{fig:b}\includegraphics[width = 0.3\textwidth]{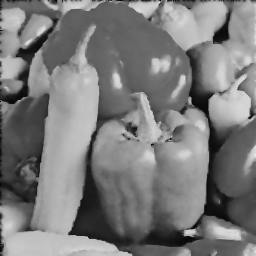}}
\hspace{1pt}
\subfloat[NLM]{\label{fig:d}\includegraphics[width = 0.3\textwidth]{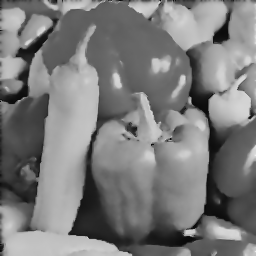}}\\

\includegraphics[width = 1.0\textwidth]{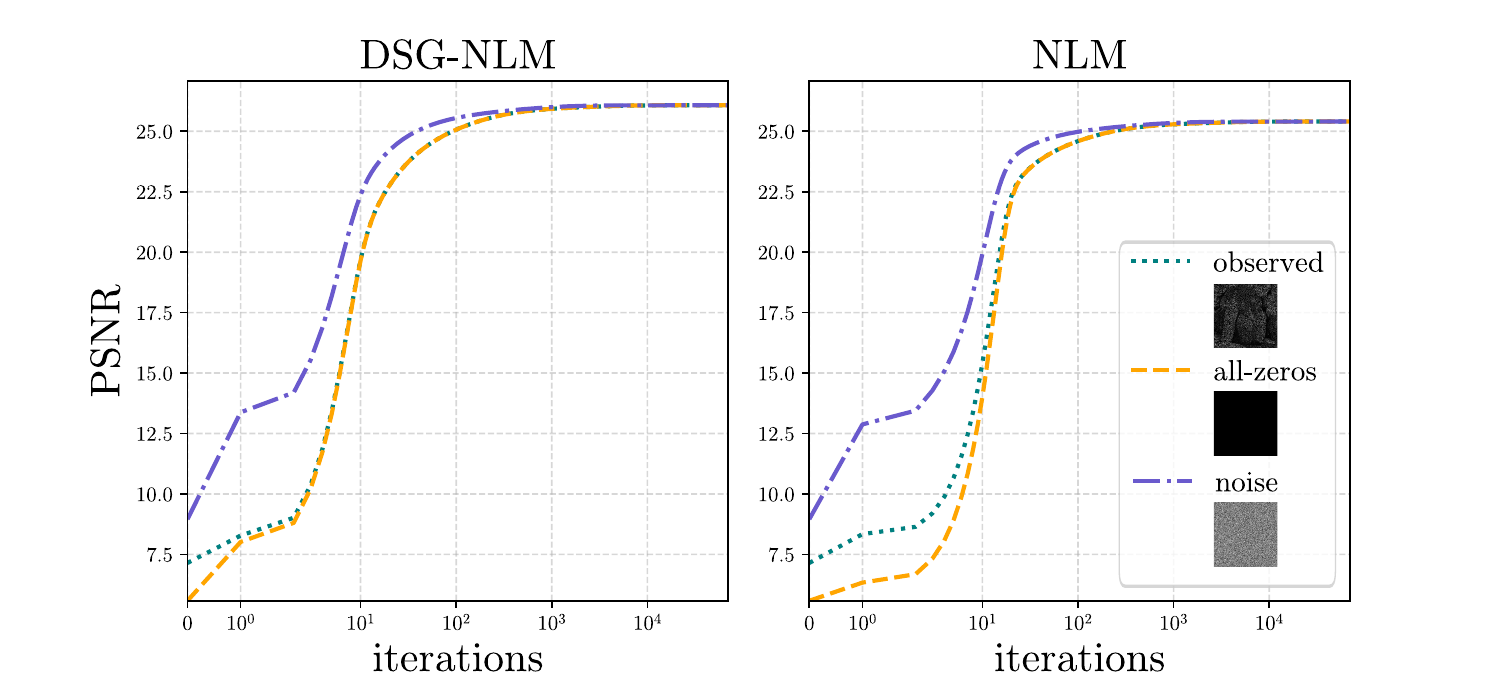}
\caption{Inpainting results for PnP-FISTA using symmetric DSG-NLM and nonsymmetric NLM denoisers. Regardless of the initialization $\x_0$ (indicated in the legend), the iterations converge to the same reconstruction having PSNR values of $26.08$ dB and $25.40$ dB.}
\label{fig:global}
\end{figure}

We know from Thm.~\ref{thm:convg} that any PnP algorithm (using a linear denoiser) converges to a unique reconstruction for any arbitrary initialization $\x_0$. {We tested this for the inpainting experiment described above, using three different initializations along with both the symmetric DSG-NLM and the nonsymmetric NLM denoisers. We see in Fig.~\ref{fig:global} that the reconstructions are identical irrespective of the initialization. The Python codes for reproducing these results can be found in https://github.com/arghyasinha/PnP-StrongConvexity.}

\section{Conclusion}

We showed that PnP regularization of linear inverse problems using kernel denoisers amounts to solving a strongly convex optimization problem. Consequently, we established objective and iterative convergence for any PnP algorithm applied to linear inverse problems, thereby providing a solid theoretical foundation for these methods. In particular, the current analysis addresses two unresolved issues: the iterate convergence of PnP-FISTA and of scaled PnP algorithms using nonsymmetric denoisers.

\section{Appendix}

\subsection{Proof of Proposition~2.1}

We have to show that, for any $\x \in \Re^n$, $\W\x$ is a minimizer of  the function $h= q+\iota_{\cR(\W)}$, where
\begin{equation*}
q(\z) = \frac{1}{2} \| \z - \x \|_2^2+ \frac{1}{2} \z^\top  \W^{\dagger} (\I-\W) \z.
\end{equation*}
Since $\sigma(\W)\subset [0,1]$, the matrix $\W^{\dagger} (\I-\W) $ is positive semidefinite, so that $q$, and hence $h$, is convex. Consequently, $\W\x$ is a minimizer $h$ if and ony if $\ze \in \partial h(\W\x)$, where $\partial h$ is the subdifferential of $h$. 
To compute $\partial h$, we apply the sum rule for the subdifferential \cite[Thm. 23.8]{Rocka}. This requires us to check that 
\begin{equation}
\label{intersect}
\mathrm{ri}  (\mathrm{dom} \ q) \, \cap  \, \mathrm{ri}   \left(\mathrm{dom} \ \iota_{\cR(\W)} \right) \neq \emptyset,
\end{equation}
where $\mathrm{ri}$ and $\mathrm{dom}$ stand for the relative interior and the domain of a set \cite{Rocka}. However, $\mathrm{dom} \ q = \Re^n$, and
\begin{equation*}
\mathrm{ri}  \left(\mathrm{dom} \ \iota_{\cR(\W)} \right) =  \mathrm{ri}  (\cR(\W))=\cR(\W),
\end{equation*}
so that \eqref{intersect} holds trivially. Applying the sum rule to $h= q + \iota_{\cR(\W)}$, we have, for all $\z \in \Re^n$,
\begin{align}
\label{sumrule}
 \partial h (\z) &=  \nabla  q(\z) + \partial \iota_{\cR(\W)} (\z) \nonumber \\
 &= \z-\x+ \W^{\dagger} (\I-\W) \z +  \partial \iota_{\cR(\W)} (\z) .
\end{align}
We know that $\partial \iota_{\cR(\W)}(\z)$ is the normal cone~ \cite[Example~3.5]{beck2017book} of $\cR(\W)$ for all $\z \in \cR(\W)$. In particular, since $\W$ is symmetric, the normal cone of $\cR({\W})$ is its orthogonal complement $\cN(\W)$. 
Moreover, since $\W^\dagger \W^2 =\W \W^\dagger \W= \W$~\cite{meyer2000matrix}, we have  from \eqref{sumrule} that
\begin{equation}
\label{delh}
\partial h (\W \x)= -(\I-\W^{\dagger}\W) \x+ \cN(\W),
\end{equation}
Applying $\W^\dagger \W^2 = \W$ once more, we get $\W(\I-\W^\dagger\W)\x = \ze$. Thus, $\boldsymbol{0}\in\partial h(\W\x)$, which completes the proof.

\subsection{Proof of Theorem~4.1}

For a linear denoiser, we know that the PnP algorithm corresponds to solving the optimization problem
\begin{equation}
\label{eq:recon}
\nu = \min_{\x \in \Re^n} \, f(\x), \quad f(\x) := \ell_{\A}(\x) + \lambda\, \varphi(\x).
\end{equation}
using a proximal (or scaled proximal) algorithm. It follows from Corollaries~3.1 and 3.2 that when $\A$ corresponds  to inpainting, deblurring, or superresolution, $f$ is strongly convex on $(\Re^n, \langle \cdot, \cdot \rangle)$ for some $ \langle \cdot, \cdot \rangle$. Thus, we can conclude from Thm.~3.1 that $\nu$ is finite, and there exists a unique minimizer $\x^* \in \Re^n$ such that $f(\x^*)=\nu$. 

Objective convergence, i.e., $f(\x_k) \to \nu$, where $\{\x_k\}$ are the iterates of the PnP algorithm, follows from standard results on the (objective) convergence of proximal algorithms; e.g., see \cite{beck2017book,parikh2014proximal}. These results implicitly assume that \eqref{eq:recon} has a minimizer, which is guaranteed for our problem.

To prove iterate convergence, it suffices to show that the (subsequential) limit points of the sequence $\{\x_k\}$ are identical. Recall from \cite{rudin1976principles} that $\bar{\x} \in\Re^n$ is called a subsequential limit of the sequence $\{\x_k\}$ if there exists a subsequence $\{\x_{k_i}\}$ of $\{\x_k\}$ that converges to $\bar{\x}$. It is evident that $\{\x_k\}$ converges to some $\x^* \in \Re^n$ if and only if all its subsequences are convergent and their limits equal $\x^*$.

We begin with the observation that the PnP iterates $\{\x_k\}$ are bounded. Indeed, since $f$ is strongly convex, it is coercive \cite{beck2017book}, i.e., $f(\x) \to \infty$ as $\|\x\| \to \infty$, where $\|\cdot\|$ is the norm induced by $\langle \cdot, \cdot \rangle$. Thus, if $\{\x_k\}$ were not bounded, we would end up contradicting the fact that $\{f(\x_k)\}$ is convergent.

As $\{\x_k\}$ is bounded, we know from the Bolzano-Weierstrass theorem that it has at least one subsequential limit  \cite{rudin1976principles}. In fact, we claim that $\{\x_k\}$ has exactly one such limit point, namely, the minimizer $\x^*$ of \eqref{eq:recon}. Indeed, suppose $\bar{\x}$ is a subsequential limit of  $\{\x_k\}$. Then there exists a subsequence $\{\x_{k_i}\}$ such that $\x_{k_i} \to \bar{\x}$. However, since $f$ is lower semicontinuous, we have
\begin{equation*}
\nu \leqslant f(\bar{\x}) \leqslant \lim_{k \to \infty} \, f(\x_{k_i}) =  \lim_{k \to \infty} \, f(\x_{k}) =\nu,
\end{equation*}
that is, $\bar{\x}$ is a minimizer of \eqref{eq:recon}. Finally, since $f$ has exactly one minimizer, it must be that $\bar{\x}=\x^*$. This holds for any subsequential limit $\bar{\x}$, whereby we it follows that $\{\x_k\}$ converges to $\x^*$.

\subsection{Computation of $\mu$}

We discuss how the strong convexity index $\mu$ is computed in the experiments in Sec.~V. Determining $\mu$ exactly is not easy as it requires us to solve an optimization over a nonconvex unit sphere $\mathbb{V} \cap \, \mathcal{S}_n$ (see Thm.~3.2). Nonetheless, we can establish a lower bound for $\mu$ by expanding the domain to $\mathcal{S}_n$. The lower bound in question (which turns out to be positive as reported in Sec.~V) is simply the smallest eigenvalue $\lambda_{\min}(\Q)$, where
\begin{equation*}
\Q=\A^\top\!\A + \lambda \, \W^\dagger(\I - \W).
\end{equation*}

The computational challenge here is that neither the forward model $\A$ nor the denoiser $\W$ cannot be stored and manipulated as matrices; we have to work with $\A, \A^\top \W$ and $\W^\dagger$ as operators (black boxes). This is where we can utilize the power method \cite{borm2012numerical}, which involves repeatedly applying $\Q$ to determine its dominant eigenvalue $d(\Q)$, i.e., the eigenvalue with the largest absolute value. We apply the power method twice to find $\lambda_{\min}(\Q)$. The first application is to identify the dominant eigenvalue $d(\Q)$, which in our case is $d(\Q)=\lambda_{\max}(\Q) \geqslant 0$ since $\Q$ is positive semidefinite. We next apply the power method on the shifted operator $\Q_s = \Q - d(\Q) \I$ to get $d(\Q_s)$. We can verify that $d(\Q_s)=\lambda_{\min}(\Q) - d(\Q)$, which gives us
\begin{equation*}
\lambda_{\min}(\Q) = d(\Q_s) + d (\Q).
\end{equation*}

In addition to the operators \(\A\), \(\A^\top\), and \(\W\), which can all be applied efficiently, we also require the pseudoinverse \(\W^\dagger\) to compute \(\Q\). This can be achieved through multiple applications of \(\W\).

\bibliographystyle{unsrt}
\bibliography{refs}

\begin{thebibliography}{10}

\bibitem{sreehari2016plug}
S.~Sreehari, S.~V. Venkatakrishnan, B.~Wohlberg, G.~T. Buzzard, L.~F. Drummy,
  J.~P. Simmons, and C.~A. Bouman.
\newblock Plug-and-play priors for bright field electron tomography and sparse
  interpolation.
\newblock {\em IEEE Trans. Comput. Imag.}, 2(4):408--423, 2016.

\bibitem{hurault2022gradient}
Samuel Hurault, Arthur Leclaire, and Nicolas Papadakis.
\newblock Gradient step denoiser for convergent plug-and-play.
\newblock {\em Proc. ICLR}, 2022.

\bibitem{cohen2021regularization}
R.~Cohen, M.~Elad, and P.~Milanfar.
\newblock Regularization by denoising via fixed-point projection ({RED-PRO}).
\newblock {\em SIAM J. Imaging Sci.}, 14(3):1374--1406, 2021.

\bibitem{romano2017little}
Y.~Romano, M.~Elad, and P.~Milanfar.
\newblock The little engine that could: {Regularization} by denoising ({RED}).
\newblock {\em SIAM J. Imaging Sci.}, 10(4):1804--1844, 2017.

\bibitem{reehorst2018regularization}
E.~T. Reehorst and P.~Schniter.
\newblock Regularization by denoising: {Clarifications} and new
  interpretations.
\newblock {\em IEEE Trans. Comput. Imag.}, 5(1):52--67, 2018.

\bibitem{zhu2023denoising}
Yuanzhi Zhu, Kai Zhang, Jingyun Liang, Jiezhang Cao, Bihan Wen, Radu Timofte,
  and Luc Van~Gool.
\newblock Denoising diffusion models for plug-and-play image restoration.
\newblock {\em Proc. IEEE Conf. Comp. Vis. Pattern Recognit.}, pages
  1219--1229, 2023.

\bibitem{terris2024equivariant}
Matthieu Terris, Thomas Moreau, Nelly Pustelnik, and Julian Tachella.
\newblock Equivariant plug-and-play image reconstruction.
\newblock {\em Proc. CVPR}, pages 25255--25264, 2024.

\bibitem{pesquet_learning_2021}
Jean-Christophe Pesquet, Audrey Repetti, Matthieu Terris, and Yves Wiaux.
\newblock Learning maximally monotone operators for image recovery.
\newblock {\em SIAM J. Imaging Sci.}, 14(3):1206--1237, 2021.

\bibitem{hertrich_convolutional_2021}
Johannes Hertrich, Sebastian Neumayer, and Gabriele Steidl.
\newblock Convolutional proximal neural networks and plug-and-play algorithms.
\newblock {\em Linear Algebra and its Applications}, 631:203--234, 2021.

\bibitem{goujon_learning_2024}
Alexis Goujon, Sebastian Neumayer, and Michael Unser.
\newblock Learning weakly convex regularizers for convergent
  image-reconstruction algorithms.
\newblock {\em SIAM J. Imaging Sci.}, 17(1):91--115, 2024.

\bibitem{CWE2017}
S.~H. Chan, X.~Wang, and O.~A. Elgendy.
\newblock Plug-and-play {ADMM} for image restoration: {Fixed-point} convergence
  and applications.
\newblock {\em IEEE Trans. Comput. Imag.}, 3(1):84--98, 2017.

\bibitem{Ryu2019_PnP_trained_conv}
E.~Ryu, J.~Liu, S.~Wang, X.~Chen, Z.~Wang, and W.~Yin.
\newblock Plug-and-play methods provably converge with properly trained
  denoisers.
\newblock {\em Proc. Intl. Conf. Mach. Learn.}, 97:5546--5557, 2019.

\bibitem{Teodoro2019PnPfusion}
A.~M. Teodoro, J.~M. Bioucas-Dias, and M.~A.~T. Figueiredo.
\newblock A convergent image fusion algorithm using scene-adapted
  {Gaussian}-mixture-based denoising.
\newblock {\em IEEE Trans. Image Process.}, 28(1):451--463, 2019.

\bibitem{proximal-hurault22a}
Samuel Hurault, Arthur Leclaire, and Nicolas Papadakis.
\newblock Proximal denoiser for convergent plug-and-play optimization with
  nonconvex regularization.
\newblock {\em Proc. ICML}, 2022.

\bibitem{nair2024averaged}
Pravin Nair and Kunal~N Chaudhury.
\newblock Averaged deep denoisers for image regularization.
\newblock {\em J. Math. Imaging Vis.}, pages 1--18, 2024.

\bibitem{milanfar2013tour}
P.~Milanfar.
\newblock {A tour of modern image filtering: new insights and methods, both
  practical and theoretical}.
\newblock {\em IEEE Signal Process. Mag.}, 30(1):106--128, 2013.

\bibitem{milanfar2013symmetrizing}
P.~Milanfar.
\newblock Symmetrizing smoothing filters.
\newblock {\em SIAM J. Imaging Sci.}, 6(1):263--284, 2013.

\bibitem{nair2019hyperspectral}
P.~Nair, V.~S. Unni, and Kunal~N Chaudhury.
\newblock Hyperspectral image fusion using fast high-dimensional denoising.
\newblock {\em Proc. IEEE Intl. Conf. Image Process.}, pages 3123--3127, 2019.

\bibitem{nair2022plug}
Pravin Nair and Kunal~N Chaudhury.
\newblock Plug-and-play regularization using linear solvers.
\newblock {\em IEEE Trans. Image Process.}, 31:6344--6355, 2022.

\bibitem{gavaskar2021plug}
Ruturaj~G Gavaskar, Chirayu~D Athalye, and Kunal~N Chaudhury.
\newblock On plug-and-play regularization using linear denoisers.
\newblock {\em IEEE Trans. Image Process.}, 30:4802--4813, 2021.

\bibitem{nair2021fixed}
Pravin Nair, Ruturaj~G Gavaskar, and Kunal~Narayan Chaudhury.
\newblock Fixed-point and objective convergence of plug-and-play algorithms.
\newblock {\em IEEE Trans. Comput. Imag.}, 7:337--348, 2021.

\bibitem{beck2017book}
A.~Beck.
\newblock {\em First-Order Methods in Optimization}.
\newblock {SIAM}, 2017.

\bibitem{bouman2022foundations}
Charles~A Bouman.
\newblock {\em Foundations of Computational Imaging: A Model-Based Approach}.
\newblock SIAM, 2022.

\bibitem{buades2005non}
A.~Buades, B.~Coll, and J.~M. Morel.
\newblock A non-local algorithm for image denoising.
\newblock {\em Proc. IEEE Conf. Comp. Vis. Pattern Recognit.}, 2:60--65, 2005.

\bibitem{takeda2007kernel}
H.~Takeda, S.~Farsiu, and P.~Milanfar.
\newblock Kernel regression for image processing and reconstruction.
\newblock {\em IEEE Trans. Image Process.}, 16(2):349--366, 2007.

\bibitem{talebi2013global}
H.~Talebi and P.~Milanfar.
\newblock Global image denoising.
\newblock {\em IEEE Trans. Image Process.}, 23(2):755--768, 2013.

\bibitem{Moreau1965}
J.~J. Moreau.
\newblock Proximit\'{e} et dualit\'{e} dans un espace {H}ilbertien.
\newblock {\em Bull. Soc. Math. France}, 93:273--299, 1965.

\bibitem{ACK2023-contractivity}
C.~D. Athalye, K.~N. Chaudhury, and B.~Kumar.
\newblock On the contractivity of plug-and-play operators.
\newblock {\em IEEE Signal Process. Lett.}, 30:1447--1451, 2023.

\bibitem{meyer2000matrix}
C.~D. Meyer.
\newblock {\em {Matrix Analysis and Applied Linear Algebra}}.
\newblock SIAM, 2000.

\bibitem{athalye2023corrections}
Chirayu~D Athalye and Kunal~N Chaudhury.
\newblock Corrections to “{O}n the contractivity of plug-and-play
  operators”.
\newblock {\em IEEE Signal Process. Lett.}, 30:1817--1817, 2023.

\bibitem{Rocka}
Ralph~Tyrell Rockafellar.
\newblock {\em Convex Analysis}.
\newblock Princeton University Press, 1970.

\bibitem{parikh2014proximal}
N.~Parikh and S.~Boyd.
\newblock Proximal algorithms.
\newblock {\em Found. Trends. Optim.}, 1(3):127--239, 2014.

\bibitem{rudin1976principles}
Walter Rudin.
\newblock {\em Principles of Mathematical Analysis}.
\newblock McGraw-Hill, 1976.

\bibitem{borm2012numerical}
Steffen B{\"o}rm and Christian Mehl.
\newblock {\em Numerical Methods for Eigenvalue Problems}.
\newblock Walter de Gruyter, 2012.

\end{thebibliography}

\end{document}